\documentclass[11pt,reqno]{amsart}

\headheight=6.00pt
\textheight=8.9in
\textwidth=6.8in
\oddsidemargin=-0.2in
\evensidemargin=-0.2in
\topmargin=0.1in

\usepackage{amsmath}
\usepackage{amssymb}
\usepackage{amscd}
\usepackage{latexsym}
\usepackage{graphicx}
\usepackage{color}

\newcommand{\ispa}[1]{\langle \,#1 \,\rangle } 
\newcommand{\tf}[2]{{\textstyle \frac{#1}{#2}}}

\newcommand{\ol}{\overline}

\newcommand{\mb}{\mathbb}

\newcommand{\dcal}{\mathcal{D}}

\newcommand{\fcal}{\mathcal{F}}

\newcommand{\hcal}{\mathcal{H}}

\newcommand{\lcal}{\mathcal{L}}

\newcommand{\tcal}{\mathcal{T}}

\newcommand{\spec}{{\rm Spec\,}}
\newcommand{\dsp}{\displaystyle}

\newcommand{\wlim}{\operatornamewithlimits{{\rm w}-lim}}

\renewcommand{\arccos}{{\rm Cos}^{-1}}
\renewcommand{\arcsin}{{\rm Sin}^{-1}}

\newtheorem{thm}{{\sc Theorem}}

\newtheorem{prop}[thm]{{\sc Proposition}}

\newtheorem{ex}{{\sc Example}}


\begin{document}

\title[The Hamiltonians of quantum walks]
{The Hamiltonians generating one-dimensional\\ 
discrete-time quantum walks}
\author{Tatsuya Tate}
\address{Mathematical Institute, Graduate School of Sciences, Tohoku University, 
Aoba, Sendai 980-8578, Japan. }
\email{tate@math.tohoku.ac.jp}
\thanks{The author is partially supported by JSPS Grant-in-Aid for Scientific Research (No. 21740117, No. 25400068).}
\date{\today}

\renewcommand{\thefootnote}{\fnsymbol{footnote}}
\renewcommand{\theequation}{\thesection.\arabic{equation}}
\renewcommand{\labelenumi}{{\rm (\arabic{enumi})}}
\renewcommand{\labelenumii}{{\rm (\alph{enumii})}}
\numberwithin{equation}{section}

\begin{abstract}
An explicit formula of the Hamiltonians generating one-dimensional discrete-time quantum walks is given. 
The formula is deduced by using the algebraic structure introduced in \cite{T}. 
The square of the Hamiltonian turns out to be an operator without, essentially, the `coin register', 
and hence it can be compared with the one-dimensional continuous-time quantum walk. 
It is shown that, under a limit with respect to a parameter, which expresses 
the magnitude of the diagonal components of the unitary matrix defining the discrete-time 
quantum walks, the one-dimensional continuous-time quantum walk is obtained from operators 
defined through the Hamiltonians of the one-dimensional discrete-time quantum walks. 
Thus, this result can be regarded, in one-dimension, as a partial answer to a problem 
proposed by Ambainis \cite{A}. 
\end{abstract}

\maketitle

\section{Introduction}\label{INTRO}
The notion of discrete-time quantum walks (quantum walks for short) are originally proposed 
in quantum physics by Aharonov-Davidovich-Zagury \cite{ADZ} and 
re-discovered in computer science by, for instance, Nayak-Vishwanath \cite{NV}, 
Aharonov-Ambainis-Kempe-Vazirani \cite{AAKV},  Ambainis-Bach-Nayak-Vishwanath-Watrous \cite{ABNVW}. 
For more historical background, see, for instance, \cite{Ke}, \cite{Ko3}. 
The one-dimensional quantum walks are defined as a non-commutative analogue of the usual 
random walks on the set of integers, $\mb{Z}$, and they are defined as unitary operators on the Hilbert space $\ell^{2}(\mb{Z},\mb{C}^{2})$ 
consisting of all square summable $\mb{C}^{2}$-valued functions on $\mb{Z}$. 
According to the asymptotic formulas obtained in \cite{ST}, there is a resemblance between 
the asymptotic behavior, in a long-time limit, of transition probabilities of one-dimensional quantum walks and 
the asymptotic behavior, in a high-energy limit, of modulus squares of the Hermite functions on the real line. 
Thus, it would be reasonable to think one-dimensional quantum walks as a discretized, 
in both of space and time parameters, model for one-dimensional quantum systems. 
However, concrete formulas of their Hamiltonian had not been made clear. 
The aim of this paper is to give an explicit formula for the Hamiltonians generating the one-dimensional 
quantum walks and investigate their properties. As a result, a direct relation between operators 
defined through the Hamiltonians of the discrete-time quantum walks and the continuous-time quantum walk
in one-dimension is obtained.

To describe our main results let us prepare notation. 
Let $T$ be a two-by-two special unitary matrix. The quantum walk associated with $T$, denoted $U(T)$, 
is defined, as a unitary operator on $\ell^{2}(\mb{Z},\mb{C}^{2})$, by the formula
\begin{equation}\label{1DQWdef}
U(T)=TP_{1} \tau +TP_{2}\tau^{-1}, 
\end{equation}
where $\tau$ is the shift operator on $\ell^{2}(\mb{Z},\mb{C}^{2})$ defined by 
\begin{equation}\label{shift}
(\tau f)(x)=f(x-1) \quad (f \in \ell^{2}(\mb{Z},\mb{C}^{2})), 
\end{equation}
and $P_{i}:\mb{C}^{2} \to \mb{C}^{2}$ denotes the orthogonal projection onto the 
one-dimensional subspace $\mb{C}{\bf e}_{i}$ in $\mb{C}^{2}$, where $\{{\bf e}_{1},{\bf e}_{2}\}$ denotes 
the standard basis on $\mb{C}^{2}$. 
Suppose that the special unitary matrix $T$ is given by 
\begin{equation}\label{su2}
T=
\begin{pmatrix}
a & b \\
-\ol{b} & \ol{a}
\end{pmatrix}
\quad \mbox{with} \quad ab \neq 0. 
\end{equation}
We set 
\begin{equation}\label{not1}
s=|a|, \ \  r=|b|, \ \ 
\alpha=\frac{a}{|a|},\ \  \beta=\frac{b}{|b|}. 
\end{equation}
We define the function $\varphi_{s}(\theta)$ in $\theta \in \mb{R}$ by 
\begin{equation}\label{eigenF}
\varphi_{s}(\theta)=\arccos (s \cos \theta). 
\end{equation}
For any integer $x$, we define the integrals $I(x)$, $J(x)$ by 
\begin{equation}\label{intco1}
I(x)=\frac{1}{2\pi} \int_{0}^{2\pi} e^{-ix\theta} \sin \theta \frac{\varphi_{s}(\theta)}{\sin \varphi_{s}(\theta)}\,d\theta,\quad 
J(x)=\frac{1}{2\pi} \int_{0}^{2\pi} e^{-ix \theta} \frac{\varphi_{s}(\theta)}{\sin \varphi_{s}(\theta)}\,d\theta. 
\end{equation}
We define the matrix-valued function $\dcal_{T}$ on $\mb{Z}$ by 
\begin{equation}\label{kernel1}
\dcal_{T}(x)=
\begin{pmatrix}
s\alpha^{x} I(x) & -it\alpha^{x+1} \beta J(x+1) \\
it \alpha^{x-1} \beta^{-1}J(x-1) & -s \alpha^{x}I(x)
\end{pmatrix}
\quad (x \in \mb{Z}). 
\end{equation}
We then define the bounded self-adjoint operator $D(T)$ on $\ell^{2}(\mb{Z},\mb{C}^{2})$ by the formula 
\begin{equation}\label{HamDef}
D(T)=\sum_{y \in \mb{Z}} \dcal_{T}(y)\tau^{y}.
\end{equation}
\begin{thm}\label{HamTH}
We have $\dsp U(T)=e^{iD(T)}$. 
\end{thm}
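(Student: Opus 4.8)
The plan is to pass to the Fourier side, where $U(T)$ and $D(T)$ both become matrix multiplication operators, and then to verify the asserted identity fiberwise in $\SU(2)$. First I identify $\ell^2(\Z,\C^2)$ with $L^2([0,2\pi),\C^2)$ through $\wh f(\theta)=\sum_{x\in\Z}f(x)e^{-ix\theta}$. Under this transform the shift $\tau$ acts as multiplication by $e^{-i\theta}$, so $U(T)$ becomes multiplication by
\[
\wh U(\theta)=T\begin{pmatrix}e^{-i\theta}&0\\0&e^{i\theta}\end{pmatrix}=\begin{pmatrix}ae^{-i\theta}&be^{i\theta}\\-\ol b\,e^{-i\theta}&\ol a\,e^{i\theta}\end{pmatrix}\in\SU(2),
\]
while $D(T)=\sum_y\dcal_T(y)\tau^y$ becomes multiplication by the symbol $\wh D(\theta)=\sum_{y\in\Z}\dcal_T(y)e^{-iy\theta}$. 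Since the Fourier transform carries $e^{iD(T)}$ to multiplication by $e^{i\wh D(\theta)}$, Theorem~\ref{HamTH} is equivalent to the pointwise identity $\wh U(\theta)=e^{i\wh D(\theta)}$ for all $\theta$.

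Next I would put $\wh U(\theta)$ into the standard form for $\SU(2)$. Writing $a=s\alpha$ and $\eta=\theta-\arg\alpha$, one has $\half\tr\wh U(\theta)=\re(ae^{-i\theta})=s\cos\eta=\cos\varphi_s(\eta)$; because $s<1$ keeps $\varphi_s(\eta)\in(0,\pi)$, I set $\mu(\theta)=\varphi_s(\eta)$, so that $\cos\mu(\theta)=\half\tr\wh U(\theta)$. The traceless part $R(\theta)=\wh U(\theta)-\cos\mu(\theta)I$ is anti-Hermitian, as one checks directly from the four entries, so $N(\theta):=R(\theta)/(i\sin\mu(\theta))$ is Hermitian and traceless; by Cayley--Hamilton $N(\theta)^2=-(\det N(\theta))I$, and $\det\wh U(\theta)=1$ forces $\det N(\theta)=-1$, hence $N(\theta)^2=I$. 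Therefore $e^{i\mu(\theta)N(\theta)}=\cos\mu(\theta)\,I+i\sin\mu(\theta)N(\theta)=\wh U(\theta)$, and the theorem reduces to the single symbol identity $\wh D(\theta)=\mu(\theta)N(\theta)$.

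It remains to compute $\wh D(\theta)$ from the definition of $\dcal_T$ and match it with $\mu(\theta)N(\theta)=(\varphi_s(\eta)/\sin\varphi_s(\eta))\,(-iR(\theta))$. The key observation is that $I(x)$ and $J(x)$ are precisely the Fourier coefficients of the functions $g_I(\phi)=\sin\phi\,\varphi_s(\phi)/\sin\varphi_s(\phi)$ and $g_J(\phi)=\varphi_s(\phi)/\sin\varphi_s(\phi)$, both smooth on the circle since $s<1$ keeps $\sin\varphi_s>0$; thus $\sum_xI(x)e^{ix\omega}=g_I(\omega)$ and $\sum_xJ(x)e^{ix\omega}=g_J(\omega)$, and in particular the series defining $\wh D$ converges with rapidly decaying coefficients. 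The factor $\alpha^x$ turns $e^{-ix\theta}$ into $e^{-ix\eta}$, so the diagonal entry gives $\wh D_{11}(\theta)=s\,g_I(-\eta)=-s\sin\eta\,\varphi_s(\eta)/\sin\varphi_s(\eta)$ after using the evenness of $\varphi_s$; the index shifts $x\mapsto x\pm1$ inside the $J$-entries pull out the prefactors $e^{\pm i\theta}$ and, together with $\beta^{\pm1}$ and $t=|b|=r$, reproduce $g_J(-\eta)$ times the off-diagonal entries of $-iR(\theta)$. Carrying this out for all four entries yields exactly $\wh D(\theta)=(\varphi_s(\eta)/\sin\varphi_s(\eta))(-iR(\theta))=\mu(\theta)N(\theta)$, which completes the proof.

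I expect this last step to be the main obstacle. The bookkeeping of the phases $\alpha,\beta$ and of the index shifts $J(x\pm1)$ must be tracked carefully so that the off-diagonal normalizations line up with the diagonal one, and it is precisely the evenness $\varphi_s(-\eta)=\varphi_s(\eta)$ together with the relation $t=r$ that makes all four entries agree with the single scalar factor $\varphi_s(\eta)/\sin\varphi_s(\eta)$ multiplying $-iR(\theta)$.
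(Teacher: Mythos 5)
Your argument is correct, and at the top level it follows the same strategy as the paper: conjugate by the Fourier transform so that both $U(T)$ and $D(T)$ become multiplication by matrix-valued symbols, then verify the resulting identity fiber by fiber in $\SU(2)$. The difference lies in how the fiberwise identity is established. The paper works from the Hamiltonian side: it records that the symbol of $D(T)$ is the matrix $L(\theta)$ (its $\eqref{HamDef2}$, immediate from the way $\dcal_{T}$ was built out of Fourier coefficients), then diagonalizes $L(\theta)$ explicitly --- eigenvalues $\pm\varphi_{s}(\theta+\mu)$, eigenvector matrix $B(\theta)$ --- and checks that $B(\theta)\,{\rm diag}(e^{i\varphi_{s}},e^{-i\varphi_{s}})\,B(\theta)^{-1}$ equals the walk symbol $T(e^{i\theta})$. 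You work from the walk side: you put $\wh U(\theta)$ in the Pauli normal form $\cos\mu\, I+i\sin\mu\, N$ with $N$ Hermitian, traceless, and $N^{2}=I$ (Cayley--Hamilton plus $\det \wh U=1$), which gives $\wh U=e^{i\mu N}$ with no eigenvectors or matrix inversion needed, and then reduce the theorem to the symbol identity $\wh D=\mu N$, verified entrywise from the fact that $I(x)$ and $J(x)$ are the Fourier coefficients of $\sin\phi\,\varphi_{s}(\phi)/\sin\varphi_{s}(\phi)$ and $\varphi_{s}(\phi)/\sin\varphi_{s}(\phi)$. Your route buys a cleaner exponential step (the power series of $e^{i\mu N}$ replaces the paper's ``one can check directly'' computation with $B(\theta)^{-1}$), and it makes explicit the symbol computation that the paper absorbs into the construction of $\dcal_{T}$; the price is the entrywise phase bookkeeping in your last step, which you only sketch but which does close exactly as you describe --- in particular your reading of the undefined $t$ in $\eqref{kernel1}$ as $r=|b|$ is the right one, matching the off-diagonal entries $-ir\beta e^{-i\theta}$ and $ir\ol{\beta}e^{i\theta}$ of the paper's $L(\theta)$. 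Your opposite Fourier sign convention ($\tau\mapsto e^{-i\theta}$ rather than $e^{i\theta}$) is harmless; it merely replaces every symbol by its reflection in $\theta$.
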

That is, the bounded self-adjoint operator $D(T)$ is the Hamiltonian generating the quantum walk $U(T)$. 
The proof of Theorem $\ref{HamTH}$ is rather easy, but the method to deduce the formula $\eqref{HamDef}$ might not be so obvious. 
Indeed, we use the algebraic structure behind one-dimensional quantum walks introduced 
in \cite{T}. This algebraic structure is nothing but a unitary representation of the infinite dihedral group (\cite{O}). 
See Section $\ref{ALG}$ in this paper for this viewpoint. 

Having obtained the Hamiltonian generating the one-dimensional quantum walk, it would be important to understand 
its various properties. First, we mention some of simple properties of the operator $D(T)$. 
The following can be proved in an easy way. (See Section $\ref{HAM}$ for details.) 
\begin{itemize}
\item The spectrum, $\spec (D(T))$, is given by 
\begin{equation}\label{SpecDT}
\spec(D(T))=[-\arccos (-s),-\arccos(s)] \cup [\arccos(s),\arccos(-s)].
\end{equation}
\item $D(T)$ does not have the locality, that is, $D(T)(\delta_{x} \otimes \varphi)$ has an infinite support, where 
the function $\delta_{x} \otimes u$ with $x \in \mb{Z}$ and $u \in \mb{C}^{2}$ is defined by 
\[
(\delta_{x} \otimes u) (y)=
\begin{cases}
u & \mbox{(when $y=x$)}, \\
0 & \mbox{(when $y \neq x$)}. 
\end{cases}
\]
\item The square $D(T)^{2}$ is, essentially, an operator acting on scalars (without `the coin register'), that is, we have the following. 
\begin{equation}\label{DTsq}
D(T)^{2}=\sum_{y \in \mb{Z}} \alpha^{y} \fcal(\varphi_{s}^{2})(y) I_{2} \tau^{y}, 
\end{equation}
where $I_{2}$ is the two-by-two identity matrix and 
$\fcal: L^{2}(S^{1}) \to \ell^{2}(\mb{Z})$ is the Fourier transform defined by 
\begin{equation}\label{SFourier}
\fcal(k)(x)=\frac{1}{2\pi}\int_{0}^{2\pi} e^{-ix \theta} k(\theta)\,d\theta \quad (x \in \mb{Z}), 
\end{equation}
where $L^{2}(S^{1})$ denotes the Hilbert space consisting of all square integrable (with respect to the uniform measure) functions 
on the unit circle $S^{1}$. 
\end{itemize}
According to the last item of the above, it would be reasonable to consider the operator $\widehat{H}(\varphi_{s})$ on 
the space $\ell^{2}(\mb{Z})$ of square summable (scalar) functions on $\mb{Z}$ defined by 
\begin{equation}\label{phis}
\widehat{H}(\varphi_{s}):=\sum_{y} \fcal(\varphi_{s})(y)\tau^{y}, 
\end{equation}
where $\tau$ is now the shift operator on $\ell^{2}(\mb{Z})$ defined by the same formula as in $\eqref{shift}$. 
In \cite{A}, Ambainis proposed a problem how discrete- and continuous-time 
quantum walks can be obtained one from another. 
Since we found the operators $\widehat{H}(\varphi_{s})$ acting on $\ell^{2}(\mb{Z})$ from 
the Hamiltonians of the one-dimensional discrete-time quantum walks, 
it would be interesting to consider some relationships between these operators and the continuous-time quantum walk. 
Recall that the continuous-time quantum walk, introduced originally by Childs-Farhi-Gutmann \cite{CFG}, 
is the unitary operator $e^{it \widehat{H}}$ acting on $\ell^{2}(\mb{Z})$, 
where $\widehat{H}$ denotes the standard symmetric random walk on $\mb{Z}$ defined by 
\begin{equation}\label{SRW}
\widehat{H}=\frac{1}{2}(\tau +\tau^{-1}).  
\end{equation}
Note that we have $\spec(\widehat{H})=[-1,1]$ while $\spec (\widehat{H}(\varphi_{s}))=[\arccos(s),\arccos(-s)]$. 
Therefore, it would be reasonable to scale the function $\varphi_{s}$ as 
\begin{equation}\label{scaleF}
\psi_{s}(\theta):=\frac{1}{\arcsin(s)}\arcsin(s \cos \theta)=\frac{1}{\arcsin(s)} \left(
\frac{\pi}{2} -\varphi_{s}(\theta)
\right)
\end{equation}
so that $\spec(\widehat{H}(\psi_{s}))=[-1,1]$, 
where $\widehat{H}(\psi_{s})$ is the operator defined by the formula $\eqref{phis}$ with $\psi_{s}$ replacing $\varphi_{s}$. 
We then compare the operator $\widehat{H}$ with $\widehat{H}(\varphi_{s})$ and $\widehat{H}(\psi_{s})$. 
Note that if we set 
\begin{equation}\label{psi0}
\psi_{0}(\theta)=\cos \theta \quad (\theta \in \mb{R}), 
\end{equation}
then one has $\widehat{H}(\psi_{0})=\widehat{H}$, and by Taylor's formula for $\arcsin (x)$, we see 
\[
\|\psi_{s}-\psi_{0}\|_{C(S^{1})}=O(s^{2}), \quad 
\|\widehat{H}(\psi_{s}) -\widehat{H}(\psi_{0})\|_{{\rm op}} =O(s^{2}) \quad (s \to 0), 
\]
where $\dsp \| \cdot \|_{C(S^{1})}$ denotes the supremum norm on the algebra $C(S^{1})$ 
consisting of all continuous functions on $S^{1}$ and $\|\cdot \|_{{\rm op}}$ denotes 
the operator norm.  Thus, in the above sense, the classical random walk is obtained, 
in the limit $s$ tending to zero, from the operators $\widehat{H}(\psi_{s})$ defined 
through the Hamiltonians of the discrete-time quantum walks. 
However, it would be interesting to compare these operators in the weak-limits of 
the probability distributions. 
To define these probability distributions, we generalize a little bit our setting up. 
For any $k \in C(S^{1})$, we denote $M(k)$ the 
multiplication operator on $L^{2}(S^{1})$ by $k$. 
We define the operator $\widehat{H}(k)$ on $\ell^{2}(\mb{Z})$ by 
\begin{equation}\label{mult2}
\widehat{H}(k)=\fcal M(k) \fcal^{*},  
\end{equation}
which has the expression $\eqref{phis}$ with $k$ replacing $\varphi_{s}$. 
When $k$ is real-valued, the operator $\widehat{H}(k)$ is self-adjoint, and hence 
we have the $1$-parameter group of unitary operators $e^{it\widehat{H}(k)}$. 
For any $t \in \mb{R}$, we define the function $p_{t}(k;x)$ in $x \in \mb{Z}$ by 
\begin{equation}\label{PD1}
p_{t}(k;x)=|\ispa{e^{it\widehat{H}(k)}\delta_{0},\delta_{x}}|^{2} \quad (x \in \mb{Z}), 
\end{equation}
where $\ispa{,}$ denotes the standard $\ell^{2}$-inner product on $\ell^{2}(\mb{Z})$ and 
$\delta_{x} \in \ell^{2}(\mb{Z})$ is defined as 
\[
\delta_{x}(y)=
\begin{cases}
1 & \mbox{(when $y =x$)}, \\
0 & \mbox{(otherwise)}. 
\end{cases}
\]
Note that $\{p_{t}(k;x)\}_{x \in \mb{Z}}$ is a probability distribution for each $t \in \mb{R}$ and each continuous real-valued 
function $k$ on $S^{1}$. 
Our next theorem is on the weak limit distributions of the probability measures
\begin{equation}\label{contM}
d\mu_{t}(k):=\sum_{x \in \mb{Z}} p_{t}(k;x)\delta_{x/t}
\end{equation}
on $\mb{R}$ for $k=\psi_{0}$, $k=\varphi_{s}$ and $k=\psi_{s}$ ($0<s<1$), where $\delta_{\xi}$ for $\xi \in \mb{R}$ denotes 
the Dirac measure at $\xi$. 
\begin{thm}\label{wld1}
For any interval $I$ in $\mb{R}$, let $\chi_{I}(x)$ denote the indicator function of $I$. 
We set $\rho(s):=\arcsin(s)$. Then we have the following. 
\begin{enumerate}
\item $\dsp \wlim_{t \to \infty}d\mu_{t}(\psi_{0}) = \frac{1}{\pi \sqrt{1-x^{2}}} \chi_{(-1,1)}(x)\,dx$. 

\vspace{2pt}

\item $\dsp \wlim_{t \to \infty}d\mu_{t}(\varphi_{s})=\frac{r}{\pi (1-x^{2}) \sqrt{s^{2}-x^{2}}} \chi_{(-s,s)}(x)\,dx$ $(0<s<1)$. 

\vspace{2pt}

\item $\dsp \wlim_{t \to \infty}d\mu_{t}(\psi_{s})=\frac{\rho(s) r}{\pi (1-\rho(s)^{2}x^{2}) \sqrt{s^{2}-\rho(s)^{2}x^{2}}} \chi_{(-s/\rho(s),s/\rho(s))}(x)\,dx$ $(0<s<1)$.  

\end{enumerate}
\end{thm}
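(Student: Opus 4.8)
The plan is to diagonalize every operator $\wh{H}(k)$ through the Fourier transform $\fcal$, reduce the transition amplitudes to Fourier coefficients of $e^{itk}$, and then pass to the limit at the level of characteristic functions of the measures $d\mu_t(k)$. First I would use the relation $\wh{H}(k)=\fcal M(k)\fcal^*$ from $\eqref{mult2}$, together with $\fcal^*\delta_x(\theta)=e^{ix\theta}$ and the functional calculus identity $e^{it\wh{H}(k)}=\fcal M(e^{itk})\fcal^*$, to compute
\[
\ispa{e^{it\wh{H}(k)}\delta_0,\delta_x}=\frac{1}{2\pi}\int_0^{2\pi}e^{i(tk(\theta)-x\theta)}\,d\theta=\fcal(e^{itk})(x),
\]
so that $p_t(k;x)=|\fcal(e^{itk})(x)|^2$. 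Writing $g_t=e^{itk}$, the characteristic function of $d\mu_t(k)$ at $z\in\mb{R}$ is then
\begin{align*}
\int_{\mb{R}}e^{iz\xi}\,d\mu_t(k)(\xi)
&=\sum_{x \in \mb{Z}}e^{izx/t}|\fcal(g_t)(x)|^2\\
&=\frac{1}{2\pi}\int_0^{2\pi}e^{it[k(\theta+z/t)-k(\theta)]}\,d\theta,
\end{align*}
where the second equality follows from the unitarity of $\fcal$ (Plancherel) and the translation identity $e^{iax}\fcal(g)(x)=\fcal(g(\cdot+a))(x)$ with $a=z/t$.

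Next I would pass to the limit $t\to\infty$. Because $0<s<1$ keeps $s\cos\theta$ strictly inside $(-1,1)$, each of $\psi_0$, $\varphi_s$, $\psi_s$ is smooth on $S^1$, so $k'$ is uniformly continuous and $t[k(\theta+z/t)-k(\theta)]=z\int_0^1 k'(\theta+uz/t)\,du\to zk'(\theta)$ uniformly in $\theta$. Since the integrand has modulus $1$, dominated convergence yields
\[
\lim_{t\to\infty}\int_{\mb{R}}e^{iz\xi}\,d\mu_t(k)(\xi)=\frac{1}{2\pi}\int_0^{2\pi}e^{izk'(\theta)}\,d\theta,
\]
which is exactly the characteristic function of the push-forward of the uniform probability measure $\tf{d\theta}{2\pi}$ on $S^1$ under $\theta\mapsto k'(\theta)$. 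By L\'evy's continuity theorem, $d\mu_t(k)$ converges weakly to this push-forward measure; thus the weak limit is the distribution of the `group velocity' $k'$.

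It then remains to compute this push-forward for each $k$ by the change of variables $x=k'(\theta)$. For $k=\psi_0$ one has $k'=-\sin\theta$, whose distribution under $\tf{d\theta}{2\pi}$ is the arcsine law $\tf{1}{\pi\sqrt{1-x^2}}\chi_{(-1,1)}(x)\,dx$, giving $(1)$. For $k=\varphi_s$ a direct computation gives
\[
\varphi_s'(\theta)=\frac{s\sin\theta}{\sqrt{1-s^2\cos^2\theta}},\qquad
\varphi_s''(\theta)=\frac{s(1-s^2)\cos\theta}{(1-s^2\cos^2\theta)^{3/2}},
\]
so $\theta\mapsto\varphi_s'(\theta)$ is two-to-one onto $(-s,s)$; solving $x=\varphi_s'(\theta)$ yields $\cos^2\theta=\tf{s^2-x^2}{s^2(1-x^2)}$ and $1-s^2\cos^2\theta=\tf{1-s^2}{1-x^2}$, and summing $|\varphi_s''(\theta)|^{-1}$ over the two preimages while using $1-s^2=r^2$ produces the density $\tf{r}{\pi(1-x^2)\sqrt{s^2-x^2}}$ on $(-s,s)$, which is $(2)$. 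Finally $\eqref{scaleF}$ gives $\psi_s'=-\tf{1}{\rho(s)}\varphi_s'$, so the velocity for $\psi_s$ is $-1/\rho(s)$ times that for $\varphi_s$; rescaling the density of $(2)$ accordingly (replacing $x$ by $\rho(s)x$ and multiplying by the Jacobian factor $\rho(s)$) yields $(3)$.

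The two convergence steps are routine once the smoothness of $k$ is noted, and smoothness is guaranteed precisely by $0<s<1$. The only point demanding genuine care is the two-to-one change of variables in case $(2)$: both branches of $\cos\theta$ must be counted, and the Jacobian $\varphi_s''$ must be rewritten entirely in terms of $x$ through the two displayed identities before the two contributions can be added. I expect this change-of-variables bookkeeping, rather than the limit argument, to be the main obstacle.
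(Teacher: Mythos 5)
Your proposal is correct and follows essentially the same route as the paper: both reduce the characteristic function of $d\mu_t(k)$ to the integral $\frac{1}{2\pi}\int_0^{2\pi}e^{it[k(\theta)-k(\theta-\xi/t)]}\,d\theta$ (the paper via convolution of Fourier series, you via Plancherel plus the translation identity — the same computation), pass to the limit $\frac{1}{2\pi}\int_0^{2\pi}e^{i\xi k'(\theta)}\,d\theta$, and identify the weak limit as the push-forward of the uniform measure under $k'$. The only difference is one of completeness: the paper declares items (1)--(3) ``easily proved'' from its Proposition on the limit of characteristic functions, whereas you carry out the two-to-one change of variables and the density computations explicitly (and correctly).
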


Note that (1) in Theorem $\ref{wld1}$ is well-known (\cite{Ko2}), and the weak limits in (2) and (3) are essentially (scaled) Konno's distribution (\cite{Ko1}), 
which appears as weak limit distributions of the transition probabilities of the one-dimensional discrete-time quantum walks. 
By Taylor's formula for $\arcsin(s)$ as $s \to 0$ one finds that the distribution in the right-hand side of (1) 
is obtained from that in (3) under the weak limit $s \to 0$. More generally, we have the following. 
\begin{thm}\label{WCont1}
For $(s,u) \in [0,1) \times [0,\infty)$, we define the probability measures $d\mu_{(s,u)}$ on $\mb{R}$ by 
\[
d\mu_{(s,u)}=
\begin{cases}
{\dsp d\mu_{1/u}(\psi_{s})} & (0 \leq s <1,\, 0<u),\\
{\dsp \frac{\rho(s)\sqrt{1-s^{2}}}{\pi (1-\rho(s)^{2}x^{2})\sqrt{s^{2}-\rho(s)^{2}x^{2}}} \chi_{(-s/\rho(s),s/\rho(s))}(x)\,dx }& (0<s<1,\,u=0),\\
{\dsp \frac{1}{\pi \sqrt{1-x^{2}}} \chi_{(-1,1)}(x)\,dx } & (s=u=0). 
\end{cases}
\]
Then, the map $(s,u) \mapsto d\mu_{(s,u)}$ defines a weakly continuous map from $[0,1) \times [0,\infty)$ to 
the space of all probability measures on $\mb{R}$ with the weak topology. 
In particular, we have 
\[
\wlim_{(s,1/t) \to (0,0)}d\mu_{t}(\psi_{s})=\frac{1}{\pi \sqrt{1-x^{2}}} \chi_{(-1,1)}(x)\,dx. 
\]
\end{thm}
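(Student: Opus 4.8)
The plan is to reduce the statement to the joint continuity, in the parameters $(s,u)$, of the characteristic functions of the measures $d\mu_{(s,u)}$, and then to combine this with L\'evy's continuity theorem and the fact that both the domain $[0,1)\times[0,\infty)$ and the space of probability measures on $\R$ with the weak topology are metrizable, so that sequential continuity suffices. First I would record an explicit formula for the characteristic function of $d\mu_{t}(k)$. Writing $X$ for the position operator $X\delta_{x}=x\delta_{x}$ on $\ell^{2}(\Z)$ and using $\widehat{H}(k)=\fcal M(k)\fcal^{*}$, one has $e^{it\widehat{H}(k)}\delta_{0}=\fcal(e^{itk})$, and, since $\fcal^{*}X\fcal=-i\,d/d\theta$ so that $e^{i(\xi/t)X}$ acts as translation by $\xi/t$ in the Fourier picture,
\[
\int_{\R}e^{i\xi y}\,d\mu_{t}(k)(y)=\ispa{e^{i(\xi/t)X}e^{it\widehat{H}(k)}\delta_{0},\,e^{it\widehat{H}(k)}\delta_{0}}=\frac{1}{2\pi}\int_{0}^{2\pi}e^{it(k(\theta+\xi/t)-k(\theta))}\,d\theta.
\]
Setting $t=1/u$ and $k=\psi_{s}$ gives, for $u>0$, the function
\[
\Phi(s,u;\xi):=\int_{\R}e^{i\xi y}\,d\mu_{(s,u)}(y)=\frac{1}{2\pi}\int_{0}^{2\pi}\exp\!\left(\tf{i}{u}\bigl(\psi_{s}(\theta+u\xi)-\psi_{s}(\theta)\bigr)\right)d\theta,
\]
whose phase converges to $\xi\psi_{s}'(\theta)$ as $u\to0^{+}$; accordingly I set $\Phi(s,0;\xi):=\frac{1}{2\pi}\int_{0}^{2\pi}e^{i\xi\psi_{s}'(\theta)}\,d\theta$. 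Since by Theorem \ref{wld1} the measure $d\mu_{(s,0)}$ equals $\wlim_{t\to\infty}d\mu_{t}(\psi_{s})$ (using $r=\sqrt{1-s^{2}}$ and $\psi_{0}'(\theta)=-\sin\theta$), its characteristic function is $\lim_{u\to0^{+}}\Phi(s,u;\xi)=\Phi(s,0;\xi)$, so in every case $\Phi(s,u;\cdot)$ is the characteristic function of $d\mu_{(s,u)}$.

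Next I would show that, for each fixed $\xi\in\R$, the map $(s,u)\mapsto\Phi(s,u;\xi)$ is continuous on $[0,1)\times[0,\infty)$. Because the integrand has modulus $1$, this is a dominated-convergence statement, and it suffices to verify pointwise convergence of the integrand along an arbitrary sequence $(s_{n},u_{n})\to(s_{0},u_{0})$. For $u_{0}>0$ this is immediate from the joint continuity of $(s,\theta)\mapsto\psi_{s}(\theta)$. For $u_{0}=0$ I would write the phase as $\xi$ times the average $\frac{1}{u_{n}\xi}\int_{0}^{u_{n}\xi}\psi_{s_{n}}'(\theta+\sigma)\,d\sigma$ and compare it with $\psi_{s_{0}}'(\theta)$; the difference is bounded by $\|\psi_{s_{n}}'-\psi_{s_{0}}'\|_{C(S^{1})}$ plus the oscillation of $\psi_{s_{0}}'$ over an interval of length $|u_{n}\xi|\to0$. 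The required uniform convergence $\psi_{s_{n}}'\to\psi_{s_{0}}'$ follows from the explicit expression $\psi_{s}'(\theta)=-s\sin\theta/(\arcsin(s)\sqrt{1-s^{2}\cos^{2}\theta})$: for $s_{0}\in(0,1)$ it is the uniform continuity of this function on a compact parameter box contained in $(0,1)\times S^{1}$, and at the corner $s_{0}=0$ it follows from $s/\arcsin(s)\to1$ and $\sqrt{1-s^{2}\cos^{2}\theta}\to1$ uniformly in $\theta$, giving $\psi_{s}'\to-\sin\theta=\psi_{0}'$ uniformly. Hence the integrand converges pointwise and $\Phi(s_{n},u_{n};\xi)\to\Phi(s_{0},0;\xi)$.

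Finally I would assemble the conclusion. Along any convergent sequence $(s_{n},u_{n})\to(s_{0},u_{0})$ the previous step gives $\Phi(s_{n},u_{n};\xi)\to\Phi(s_{0},u_{0};\xi)$ for every $\xi$, and the limit is the characteristic function of the probability measure $d\mu_{(s_{0},u_{0})}$, hence continuous at $\xi=0$; L\'evy's continuity theorem then yields weak convergence $d\mu_{(s_{n},u_{n})}\to d\mu_{(s_{0},u_{0})}$. By metrizability this sequential continuity is continuity, which is the assertion, and the displayed ``in particular'' statement is the special case $(s,1/t)\to(0,0)$ together with $d\mu_{(0,0)}=\frac{1}{\pi\sqrt{1-x^{2}}}\chi_{(-1,1)}(x)\,dx$. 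I expect the main obstacle to be the corner $(0,0)$, where the long-time limit $t=1/u\to\infty$ and the degeneration $s\to0$ happen at once; what makes the interchange of these limits harmless is precisely the uniform (rather than merely pointwise) convergence of the group velocities $\psi_{s}'$ as $s\to0$, combined with the uniform bound $1$ on the oscillatory integrand that powers the dominated-convergence argument.
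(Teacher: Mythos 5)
Your proposal is correct and follows essentially the same route as the paper's proof: both reduce weak continuity to pointwise convergence of the characteristic functions $E_{t}(\psi_{s};\xi)=\frac{1}{2\pi}\int_{0}^{2\pi}e^{it[\psi_{s}(\theta)-\psi_{s}(\theta-\xi/t)]}\,d\theta$ (the paper's formula \eqref{conv2}, which you re-derive via the position operator), identify the boundary characteristic functions $\frac{1}{2\pi}\int_{0}^{2\pi}e^{i\xi\psi_{s_{o}}'(\theta)}\,d\theta$ through Theorem \ref{wld1}, and control the phase near $u=0$ by uniform-in-$\theta$ convergence of $\psi_{s}'$ as $s$ varies. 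The only differences are cosmetic: you use a mean-value/averaging bound on the phase where the paper uses a second-order Taylor remainder involving $\psi_{s}''$ (and you thereby treat the edge $s_{o}>0$ and the corner $(0,0)$ uniformly, where the paper splits them into separate cases), and you make the appeal to L\'evy's continuity theorem and metrizability explicit where the paper leaves it implicit.
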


Thus, the one-dimensional continuous-time quantum walk 
is obtained, under the limit $s=|a| \to 0$, from the scaled operator $\widehat{H}(\psi_{s})$ defined through 
the Hamiltonian $D(T)$ of the one-dimensional discrete-time quantum walk $U(T)$. 

\vspace{10pt}

\noindent{\bf Acknowledgment.} The fact that the algebraic structure introduced in \cite{T} 
defines a unitary representation of the infinite dihedral group, which is discussed in Section $\ref{ALG}$ in detail, 
is suggested to the author by Professor Nobuaki Obata. 
The author would like to express his special thanks to Professor Obata. 
\vspace{10pt}

\section{An algebraic structure}\label{ALG}

In our previous paper \cite{T}, we have introduced an algebraic structure behind the one-dimensional discrete-time quantum walks. 
Because we need to use this to deduce the Hamiltonian $D(T)$ in the next section, let us begin by recalling this structure. 
Suppose that we are given two unitary operators $V$, $W$ on a Hilbert space $\hcal_{0}$ satisfying the following relations. 

\vspace{5pt}

\noindent${\rm (QW1)}$ $W^{2}=-I$. 

\noindent${\rm (QW2)}$ $VW=WV^{-1}$. 

\vspace{5pt}

We remark that, in \cite{T}, we have introduced another unitary operator, $\sigma$, satisfying 
some relations with $V$ and $W$. However we do not introduce the operator $\sigma$ here because 
we will not use it. 
Now, if the positive numbers $s,r$ satisfy $s^{2}+r^{2}=1$, then 
it turns out that the linear combination 
\begin{equation}\label{unitary1}
U=sV+rW
\end{equation}
is a unitary operator on $\hcal_{0}$. We have the following examples. 

\begin{ex}\label{ex1}
{\rm 
Let $\hcal_{0}=\mb{C}^{2}$. For any $\alpha,\beta \in S^{1}$, we set 
\[
V(\alpha)=
\begin{pmatrix}
\alpha & 0 \\
0 & \ol{\alpha}
\end{pmatrix},\quad 
W(\beta)=
\begin{pmatrix}
0 & \beta \\
-\ol{\beta} & 0
\end{pmatrix}. 
\]
Then, the unitary matrices $V=V(\alpha)$, $W=W(\beta)$ satisfy the relations ${\rm (QW1)}$, ${\rm (QW2)}$. 
}
\end{ex}

\begin{ex}\label{ex2}
{\rm Let $\hcal_{0}=\ell^{2}(\mb{Z},\mb{C}^{2})$. We take $\alpha,\beta \in S^{1}$. 
Let $V$, $W$ be the quantum walks, 
\[
V=U(V(\alpha)),\quad W=U(W(\beta))
\]
defined in $\eqref{1DQWdef}$ associated with the unitary matrices $V(\alpha)$, $W(\beta)$ given in Example $\ref{ex1}$. 
Then $V$ and $W$ satisfy the relations ${\rm (QW1)}$, ${\rm (QW2)}$. 
}
\end{ex}

We note that for $T \in {\rm SU}(2)$ given by $\eqref{su2}$, the quantum walk $U(T)$ is written 
in the form $\eqref{unitary1}$ as 
\[
U(T)=sU(V(\alpha))+rU(W(\beta)), 
\]
where the parameters $s$, $r$, $\alpha$, $\beta$ are defined in $\eqref{not1}$. 
Furthermore, if we define a unitary matrix $T(z)$ with $z \in S^{1}$ by 
\begin{equation}\label{FQW1}
T(z)=sV(\alpha z) +r W(\beta \ol{z}), 
\end{equation}
then, under the Fourier transform $\fcal:L^{2}(S^{1},\mb{C}^{2}) \to \ell^{2}(\mb{Z},\mb{C}^{2})$, defined by the same formula as in $\eqref{SFourier}$, 
$U(T)$ and $T(z)$ are related each other by the formula
\begin{equation}\label{F1DQW}
\fcal^{*} U(T) \fcal =\tcal, 
\end{equation}
where $\tcal$ is the multiplication operator defined by
\[
(\tcal k)(\theta)=T(e^{i\theta}) k(\theta)\quad  (k \in L^{2}(S^{1},\mb{C}^{2})).
\]
The following is proved in \cite{T}. 
\begin{prop}\label{cheby}
Suppose that the unitary operators $V$, $W$ satisfy the relations ${\rm (QW1)}$ and ${\rm (QW2)}$. 
Let $x=\frac{s}{2}(V^{*}+V)$, $y=\frac{s}{2i}(V-V^{*})$, $w=rW$. 
Then, the $n$-th power $U^{n}$ of the unitary operator $U$ defined in $\eqref{unitary1}$ is represented as 
\[
U^{n}=T_{n}(x)+(iy+w)U_{n-1}(x), 
\]
where $T_{n}(x)$ and $U_{n-1}(x)$ are, respectively, 
the Chebyshev polynomials of the first kind of degree $n$ and the second kind of degree $n-1$. 
\end{prop}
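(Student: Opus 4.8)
The plan is to reduce the proposition to a single algebraic identity in a commutative algebra and then verify that identity by the recurrences defining the Chebyshev polynomials. First I would observe that, since $V$ is unitary, $V^{*}=V^{-1}$, and therefore $x+iy=\frac{s}{2}(V^{*}+V)+\frac{s}{2}(V-V^{*})=sV$. Hence, writing $z=iy+w$, the unitary operator in $\eqref{unitary1}$ takes the form $U=sV+rW=x+z$. With this notation the assertion becomes $U^{n}=T_{n}(x)+zU_{n-1}(x)$, so everything hinges on understanding how $x$ and $z$ interact.

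Next I would extract the two structural relations $xz=zx$ and $z^{2}=x^{2}-I$ from ${\rm (QW1)}$ and ${\rm (QW2)}$. Since $x$ and $y$ are both polynomials in $V$ and $V^{*}$, they commute with each other, and a short computation gives $x^{2}+y^{2}=s^{2}I$ because the cross terms cancel. The relation ${\rm (QW2)}$, rewritten as $WV=V^{-1}W$, then yields $xw=wx$ together with the anticommutation $yw+wy=0$; consequently $x$ commutes with $z=iy+w$. Finally, using $w^{2}=r^{2}W^{2}=-r^{2}I$ from ${\rm (QW1)}$ together with the anticommutation, one finds
\[
z^{2}=(iy+w)^{2}=-y^{2}+i(yw+wy)+w^{2}=x^{2}-(s^{2}+r^{2})I=x^{2}-I,
\]
where $y^{2}=s^{2}I-x^{2}$, $w^{2}=-r^{2}I$, $yw+wy=0$, and $s^{2}+r^{2}=1$ are used.

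With $[x,z]=0$ and $z^{2}=x^{2}-I$ in hand, the operators $x$ and $z$ generate a commutative algebra, so it suffices to prove the purely formal identity $(x+z)^{n}=T_{n}(x)+zU_{n-1}(x)$, treating $x$ and $z$ as commuting indeterminates subject to $z^{2}=x^{2}-1$. I would do this by induction on $n$: the cases $n=0,1$ follow from $T_{0}=1$, $U_{-1}=0$, $T_{1}(x)=x$, $U_{0}=1$, and for the inductive step I would multiply the inductive hypothesis by $U=x+z$, collect the terms not involving $z$ and the terms linear in $z$ (replacing $z^{2}$ by $x^{2}-1$), and identify the two resulting coefficients with $T_{n+1}(x)$ and $U_{n}(x)$ via the standard Chebyshev identities $U_{n}(x)=T_{n}(x)+xU_{n-1}(x)$ and $T_{n+1}(x)=xT_{n}(x)+(x^{2}-1)U_{n-1}(x)$. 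Applying the evaluation homomorphism sending the indeterminates to the operators $x$ and $z$ then gives the proposition.

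The main obstacle is the second structural relation $z^{2}=x^{2}-I$: it rests on the exact cancellation $yw+wy=0$, which uses ${\rm (QW2)}$ in an essential way, and on the normalization $s^{2}+r^{2}=1$. Once this relation is secured, the remaining argument is the routine Chebyshev bookkeeping described above.
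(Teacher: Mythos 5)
Your proof is correct and complete. One thing to note: this paper does not actually prove Proposition \ref{cheby} in-text --- it states it and cites the author's earlier paper \cite{T} --- so there is no in-paper argument to compare against line by line. Your argument is a valid, self-contained proof: the derivation $V^{*}W=WV$ from ${\rm (QW2)}$, hence $xw=wx$ and $yw+wy=0$, together with $x^{2}+y^{2}=s^{2}I$, $w^{2}=-r^{2}I$ and $s^{2}+r^{2}=1$, correctly yields the two structural relations $[x,z]=0$ and $z^{2}=x^{2}-I$ for $z=iy+w$, and the induction using $T_{n+1}(x)=xT_{n}(x)+(x^{2}-1)U_{n-1}(x)$ and $U_{n}(x)=T_{n}(x)+xU_{n-1}(x)$ is routine and sound. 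For comparison, the argument in \cite{T} is organized around an equivalent but slightly slicker observation: since ${\rm (QW1)}$ gives $W^{*}=-W$, one has $U^{*}=sV^{*}-rW$, so $x=\tfrac{1}{2}(U+U^{*})$ commutes with $U$, and unitarity of $U$ gives the quadratic relation $U^{2}-2xU+I=0$; induction with the recurrence $U_{n}=2xU_{n-1}-U_{n-2}$ then yields $U^{n}=U_{n-1}(x)U-U_{n-2}(x)$, which becomes the stated formula via $T_{n}=xU_{n-1}-U_{n-2}$ and $U-x=iy+w$. Your relation $z^{2}=x^{2}-I$ is exactly the statement $(U-x)^{2}=x^{2}-I$, i.e.\ the same quadratic relation, so the difference between the two routes is purely organizational; yours makes the commutative-algebra reduction explicit, while the quadratic-relation version avoids computing $z^{2}$ by hand.
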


Before explaining how to deduce the formula $\eqref{HamDef}$ of the operator $D(T)$, let us mention 
the meaning of the algebraic structure (QW1), (QW2). 
The relations (QW1), (QW2) define a unitary representation of the infinite dihedral group (see, for example, \cite{MKS}) as follows. 
The infinite dihedral group $\Gamma$ is a discrete group defined by 
\[
\Gamma=\mb{Z} \rtimes \mb{Z}_{2} \cong \mb{Z}_{2} \ast \mb{Z}_{2}, 
\]
where $\mb{Z}_{2}=\{\pm 1\}$ and it acts on $\mb{Z}$ in an obvious manner. 
As a set, $\Gamma$ is the product $\mb{Z} \times \mb{Z}_{2}$, and its group structure is given by 
\[
(x,\mu) (y,\nu)=(x+\mu y,\mu \nu) \quad ((x,\mu),(y,\nu) \in \mb{Z} \times \mb{Z}_{2}).  
\]
Hence the unit is $e:=(0,1)$ and the inverse element of $(x,\mu) \in \mb{Z} \times \mb{Z}_{2}$ is given by $(-\mu x,\mu)$. 
We set $a=(1,1)$ and $b=(0,-1)$. 
Then, $a$ and $b$ generate $\Gamma$ with the relation 
\[
ab=ba^{-1},\quad b^{2}=e. 
\]
Let $V$ and $W$ be unitary operators on a Hilbert space $\hcal_{0}$ satisfying the relations ${\rm (QW1)}$ and ${\rm (QW2)}$. 
Then, we can define a unitary representation $\rho:\Gamma \to U(\hcal_{0})$ of $\Gamma$, where $U(\hcal_{0})$ denotes 
the group consisting of all unitary operators on $\hcal_{0}$, by setting
\[
\rho(a)=V,\quad \rho(b)=-iW. 
\]
In particular, by Example $\ref{ex2}$, the quantum walks $V=U(V(\alpha))$, $W=U(W(\beta))$ define 
a unitary representation $(\ell^{2}(\mb{Z},\mb{C}^{2}),\rho_{{\scriptscriptstyle {\rm QW}}})$ of $\Gamma$. 

\begin{thm}\label{regular}
The unitary representation $(\ell^{2}(\mb{Z},\mb{C}^{2}),\rho_{{\scriptscriptstyle {\rm QW}}})$ of 
$\Gamma$ so defined is unitarily equivalent to the regular representation $(\ell^{2}(\Gamma), R)$. 
\end{thm}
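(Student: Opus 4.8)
The plan is to produce the equivalence directly, by exhibiting a single vector $\xi \in \ell^{2}(\mb{Z},\mb{C}^{2})$ whose $\rho_{{\scriptscriptstyle {\rm QW}}}$-orbit $\{\rho_{{\scriptscriptstyle {\rm QW}}}(g)\xi : g \in \Gamma\}$ is an orthonormal basis of $\ell^{2}(\mb{Z},\mb{C}^{2})$. Once such a $\xi$ is found, the linear map $\Phi:\ell^{2}(\Gamma) \to \ell^{2}(\mb{Z},\mb{C}^{2})$ determined by $\Phi(\delta_{g}) = \rho_{{\scriptscriptstyle {\rm QW}}}(g)\xi$ carries the orthonormal basis $\{\delta_{g}\}_{g\in\Gamma}$ of $\ell^{2}(\Gamma)$ onto an orthonormal basis, hence is unitary; and since $R(h)\delta_{g} = \delta_{hg}$, the identity $\Phi R(h)\delta_{g} = \rho_{{\scriptscriptstyle {\rm QW}}}(hg)\xi = \rho_{{\scriptscriptstyle {\rm QW}}}(h)\Phi\delta_{g}$ shows that $\Phi$ intertwines $R$ and $\rho_{{\scriptscriptstyle {\rm QW}}}$, giving exactly the asserted equivalence. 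Thus everything reduces to choosing $\xi$ and checking that its orbit is an orthonormal basis. Here I use that every element of $\Gamma$ is uniquely of the form $a^{x}$ or $a^{x}b$ with $x \in \mb{Z}$, so that the orbit splits into the two families $\rho_{{\scriptscriptstyle {\rm QW}}}(a^{x})\xi = V^{x}\xi$ and $\rho_{{\scriptscriptstyle {\rm QW}}}(a^{x}b)\xi = -iV^{x}W\xi$, where $V = U(V(\alpha))$ and $W = U(W(\beta))$.

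I would take $\xi = \delta_{0}\otimes{\bf e}_{1}$. The point is that, reading off $V$ and $W$ from $\eqref{1DQWdef}$ and Example $\ref{ex1}$, both operators act on a pair $f=(f_{1},f_{2})$ by an elementary shift-and-phase rule: $(Vf)_{1}(x)=\alpha f_{1}(x-1)$ and $(Vf)_{2}(x)=\ol{\alpha} f_{2}(x+1)$, while $(Wf)_{1}(x)=\beta f_{2}(x+1)$ and $(Wf)_{2}(x)=-\ol{\beta} f_{1}(x-1)$. Iterating the first rule gives $V^{x}\xi = \alpha^{x}\,\delta_{x}\otimes{\bf e}_{1}$, and combining the two rules gives $-iV^{x}W\xi = i\,\ol{\alpha}^{x}\ol{\beta}\,\delta_{1-x}\otimes{\bf e}_{2}$. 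Hence, as $x$ ranges over $\mb{Z}$, the first family runs through the vectors $\delta_{n}\otimes{\bf e}_{1}$ $(n\in\mb{Z})$ and the second family through the vectors $\delta_{m}\otimes{\bf e}_{2}$ $(m=1-x\in\mb{Z})$, each up to a unimodular scalar and each occurring exactly once.

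It then remains only to record the two elementary facts that finish the argument: orthonormality holds because the two families lie in the orthogonal coin-subspaces $\ell^{2}(\mb{Z})\otimes{\bf e}_{1}$ and $\ell^{2}(\mb{Z})\otimes{\bf e}_{2}$ and within each family the vectors are supported at distinct single sites of $\mb{Z}$; and completeness holds because every standard basis vector $\delta_{n}\otimes{\bf e}_{i}$ appears in the orbit. I regard this last bookkeeping step---verifying that the $a^{x}b$ family sweeps out every site of the second coin-component, so that no basis vector is omitted---as the only place requiring any care; the remainder is the routine shift-and-phase computation above. There is no serious analytic obstacle, since $\Gamma$ is discrete and the entire equivalence is read off from the explicit action of $V$ and $W$ on the standard basis of $\ell^{2}(\mb{Z},\mb{C}^{2})$.
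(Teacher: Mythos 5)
Your orbit computations are all correct, and your construction is essentially the paper's own proof made more transparent: writing $\rho:=\rho_{{\scriptscriptstyle {\rm QW}}}$, the paper's unitary $u$ is exactly the map $\delta_{g}\mapsto \rho(g^{-1})\xi$ with your cyclic vector $\xi=\delta_{0}\otimes{\bf e}_{1}$. Indeed, since $(x,-1)^{-1}=(x,-1)$ in $\Gamma$, your formula $\rho(a^{x}b)\xi=i\,\ol{\alpha}^{x}\ol{\beta}\,\delta_{1-x}\otimes{\bf e}_{2}$ reproduces verbatim the paper's $u\delta_{(x,-1)}=i\alpha^{-x}\beta^{-1}(\delta_{1-x}\otimes{\bf e}_{2})$, and $\rho(a^{-x})\xi=\alpha^{-x}\delta_{-x}\otimes{\bf e}_{1}$ reproduces $u\delta_{(x,1)}$.

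There is, however, one step that fails as written. The paper defines $R$ as the \emph{right} regular representation, $(R(g)f)(h)=f(hg)$, for which $R(h)\delta_{g}=\delta_{gh^{-1}}$; the identity $R(h)\delta_{g}=\delta_{hg}$ that you use is the \emph{left} regular representation. Consequently your map $\Phi\delta_{g}=\rho(g)\xi$ satisfies $\Phi R(h)\delta_{g}=\rho(gh^{-1})\xi$, whereas $\rho(h)\Phi\delta_{g}=\rho(hg)\xi$, and these differ because $\Gamma$ is non-abelian: taking $g=a$, $h=b$, one has $ab^{-1}=ab=(1,-1)$ but $ba=(-1,-1)$, and your own formulas give $\rho(ab)\xi=i\,\ol{\alpha}\ol{\beta}\,\delta_{0}\otimes{\bf e}_{2}\neq i\,\alpha\ol{\beta}\,\delta_{2}\otimes{\bf e}_{2}=\rho(ba)\xi$. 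The repair costs nothing beyond what you have already proved: either (i) replace $\Phi$ by $\Phi'\delta_{g}:=\rho(g^{-1})\xi$, which is still unitary (inversion permutes the orthonormal basis $\{\delta_{g}\}_{g\in\Gamma}$, and you showed the orbit of $\xi$ is an orthonormal basis) and now intertwines correctly, $\Phi' R(h)\delta_{g}=\rho(hg^{-1})\xi=\rho(h)\Phi'\delta_{g}$ --- this $\Phi'$ is precisely the paper's $u$ --- or (ii) keep $\Phi$ and invoke the standard fact that the left and right regular representations of a discrete group are unitarily equivalent via $f\mapsto f(\cdot^{-1})$. With either fix your argument is complete and coincides in substance with the paper's.
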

\begin{proof}
We first recall that the (right) regular representation $(\ell^{2}(\Gamma),R)$, where 
$\ell^{2}(\Gamma)$ is the Hilbert space consisting of 
all square summable functions on $\Gamma$ with the standard inner product, 
is defined by the formula 
\[
R:\Gamma \to U(\ell^{2}(\Gamma)),\quad 
(R(g)f)(h)=f(hg), \quad f \in \ell^{2}(\Gamma), \ g,h \in \Gamma. 
\]
Let $\delta_{(x,\mu)}$ be an element in $\ell^{2}(\Gamma)$ defined by 
\[
\delta_{(x,\mu)}(y,\nu)=
\begin{cases}
1 & \mbox{(when $(y,\nu)=(x,\mu)$)}, \\
0 & \mbox{(otherwise).}
\end{cases}
\]
Note that $\{\delta_{(x,\mu)}\,;\,(x,\mu) \in \Gamma\}$ is an orthonormal basis of $\ell^{2}(\Gamma)$. 
Then a direct computation shows that the unitary operator $u:\ell^{2}(\Gamma) \to \ell^{2}(\mb{Z},\mb{C}^{2})$ defined by the formula
\[
u\delta_{(x,1)}=\alpha^{-x}\delta_{-x} \otimes {\bf e}_{1},\quad 
u\delta_{(x,-1)}=i\alpha^{-x}\beta^{-1} (\delta_{1-x} \otimes {\bf e}_{2}) \quad (x \in \mb{Z}) 
\]
intertwines two representations $(\ell^{2}(\mb{Z},\mb{C}^{2}), \rho_{{\scriptscriptstyle {\rm QW}}})$ and $(\ell^{2}(\Gamma), R)$. 
\end{proof}

\section{The Hamiltonian $D(T)$}\label{HAM}

Let us explain how to deduce the formulas $\eqref{intco1}$, $\eqref{kernel1}$ and $\eqref{HamDef}$ for the Hamiltonian $D(T)$. 
We apply Proposition $\ref{cheby}$ for the matrices $V=V(\alpha z)$, $W=W(\beta z^{-1})$ 
with $\alpha$ and $\beta$ given in $\eqref{not1}$ to get 
the following formula. 
\[
T(z)^{n}=
\begin{pmatrix}
p_{n}(\alpha z) & q_{n}(\alpha z) \\[5pt]
-\ol{q_{n}(\alpha z)} & \ol{p_{n}(\alpha z)}
\end{pmatrix} \quad 
(z \in S^{1}), 
\]
where $p_{n}(z)$ and $q_{n}(z)$ are the Laurent polynomials given by 
\[
p_{n}(z) = T_{n}(\tf{s}{2}(z+z^{-1})) +\frac{s}{2}(z-z^{-1})U_{n-1}(\tf{s}{2}(z+z^{-1})), \quad 
q_{n}(z) = r \alpha \beta z^{-1} U_{n-1}(\tf{s}{2}(z+z^{-1})). 
\]
Setting $\alpha=e^{i\mu}$, $z=e^{i\theta}$, and using the function $\varphi_{s}(\theta)$ defined in $\eqref{eigenF}$, 
we see that 
\begin{equation}\label{pqLpoly}
p_{n}(\alpha z) = \cos (n\varphi_{s}(\theta +\mu)) +is \sin (\theta +\mu) \frac{\sin (n \varphi_{s}(\theta +\mu))}{\sin \varphi(\theta +\mu)},\quad 
q_{n}(\alpha z) = r \beta e^{-i\theta} \frac{\sin (n \varphi_{s}(\theta +\mu))}{\sin \varphi(\theta +\mu)}. 
\end{equation}
According to the formula $\eqref{pqLpoly}$, the functions $p_{n}(\alpha z)$, $q_{n}(\alpha z)$ are differentiable with respect to $n$ at $n=0$. 
Then, differentiating each component of $T(e^{i\theta})^{n}$ in $n$ at $n=0$ and dividing them by $i$, we obtain 
the following matrix. 
\[
L(\theta)=
\frac{\varphi_{s}(\theta +\mu)}{\sin \varphi_{s}(\theta +\mu)} 
\begin{pmatrix}
s \sin (\theta +\mu) & -ir \beta e^{-i\theta} \\
ir \ol{\beta} e^{i\theta} & -s \sin (\theta +\mu)
\end{pmatrix}.
\]
Let $\lcal$ be the bounded self-adjoint operator on $L^{2}(S^{1},\mb{C}^{2})$ defined by 
\[
(\lcal k)(\theta)=L(\theta) k(\theta)\quad  (k \in L^{2}(S^{1},\mb{C}^{2})).
\]
Then, the operator $D(T)$ defined in $\eqref{HamDef}$ is related to the operator $\lcal$ by the formula 
\begin{equation}\label{HamDef2}
D(T)=\fcal \lcal \fcal^{*}. 
\end{equation}
These explanation is just a way to deduce the expression $\eqref{HamDef}$ of 
the operator $D(T)$. The proof of Theorem $\ref{HamTH}$ goes as follows. 

\vspace{10pt}

\noindent{\it Proof of Theorem $\ref{HamTH}$.} \hspace{1pt} 
From $\eqref{HamDef2}$, we see that $e^{iD(T)}=\fcal e^{i\lcal} \fcal^{*}$. Thus, according to $\eqref{F1DQW}$, 
it is enough to show that $e^{i\lcal}=\tcal$. A direct computation shows that the eigenvalues of $L(\theta)$ is $\pm \varphi_{s}(\theta+\mu)$. 
Then, the matrix $L(\theta)$ is diagonalized as 
\[
B(\theta)^{-1} L(\theta) B(\theta)=
\begin{pmatrix}
\varphi_{s}(\theta +\mu) & 0 \\
0 & -\varphi_{s}(\theta +\mu)
\end{pmatrix}
\]
with the matrix $B(\theta)$ given by 
\[
B(\theta)=
\begin{pmatrix}
ir\beta e^{-i\theta} & ir \beta e^{-i\theta} \\
s \sin(\theta +\mu) -\sin \varphi_{s}(\theta +\mu) & s \sin (\theta +\mu) +\sin \varphi(\theta +\mu)
\end{pmatrix}. 
\]
Then, one can check directly that
\[
e^{iL(\theta)}=B(\theta)
\begin{pmatrix}
e^{i\varphi_{s}(\theta +\mu)} & 0 \\
0 & e^{-i\varphi_{s}(\theta+\mu)}
\end{pmatrix}B(\theta)^{-1}
=T(e^{i\theta}),  
\]
where $T(e^{i\theta})$ is the matrix defined in $\eqref{FQW1}$. 
This shows $e^{i\lcal}=\tcal$ and hence Theorem $\ref{HamTH}$. \hfill$\square$

\vspace{10pt}

We remark that, as in the above proof, the matrix-valued function $L(\theta)$ has eigenvalues $\pm \varphi_{s}(\theta +\mu)$. 
Since the spectrum of the operator $D(T)$ coincides with that of $\lcal$ and the latter is given by the 
union of images of two functions $\theta \mapsto \pm \varphi_{s}(\theta+\mu)$, we have $\eqref{SpecDT}$. 
This fact also shows that $L(\theta)^{2}=\varphi_{s}^{2}(\theta+\mu) I_{2}$ and hence $\eqref{DTsq}$.

\section{Limit theorems}\label{PTH2}

Let $k$ be a smooth real-valued function on $S^{1}$ and consider, for each $t \in \mb{R}$, the measure $d\mu_{t}(k)$ defined in $\eqref{contM}$. 
To prove Theorem $\ref{wld1}$, we use the characteristic function $E_{t}(k;\xi)$ of $d\mu_{t}(k)$, which 
is explicitly given by the formula 
\begin{equation}\label{ch1}
E_{t}(k;\xi)=\sum_{x \in \mb{Z}} p_{t}(k;x)e^{ix \xi/t} \quad (\xi \in \mb{R}).  
\end{equation}
Let us rewrite the function $E_{t}(k;\xi)$ in a useful form. 
Recall that the convolution of two continuous functions $f,g$ on $S^{1}$ is defined by 
\[
(f \ast g)(\theta)=\frac{1}{2\pi} \int_{0}^{2\pi} f(r)g(\theta-r)\,dr. 
\]
If $f$ is a smooth function on $S^{1}$, then the Fourier series 
\[
f(\theta) =\sum_{x \in \mb{Z}} \fcal(f)(x)e^{ix \theta}
\]
of $f$ converges to $f$ itself uniformly in $\theta$, and hence, for $f,g \in C^{\infty}(S^{1})$, 
\[
(f \ast g)(\theta)=\sum_{y \in \mb{Z}} \fcal(f)(y) \fcal(g)(y)e^{iy \theta}, 
\]
which converges uniformly in $\theta$. We set $\iota(f)(\theta):=\ol{f(-\theta)}$ so that $\fcal(\iota(f))(x)=\ol{\fcal(f)(x)}$ and 
\begin{equation}\label{conv1}
(f \ast \iota(f) )(\xi)  = \sum_{x \in \mb{Z}} |\fcal(f)(x)|^{2} e^{ix \xi}. 
\end{equation}
Now we set 
\[
f(\theta)= \sum_{x \in \mb{Z}} \ispa{e^{it\widehat{H}(k)}\delta_{0},\delta_{x}}e^{ix \theta},  
\]
which is well-defined in $L^{2}(S^{1})$. 
Since $e^{it \widehat{H}(k)}=\fcal e^{itM(k)} \fcal^{*}$ and $\fcal^{*}\delta_{x}=e_{x}$, where $e_{x} \in L^{2}(S^{1})$ is 
defined by $e_{x}(\theta)=e^{ix \theta}$, we have $\ispa{e^{it \widehat{H}(k)}\delta_{0},\delta_{x}}=\ispa{e^{it M(k)}e_{0},e_{x}}$ 
and hence $f(\theta)=e^{it M(k)}e_{0}=e^{it k(\theta)}$. Thus $f$ is a smooth function on $S^{1}$, and by $\eqref{PD1}$, $\eqref{ch1}$, $\eqref{conv1}$, we see 
\begin{equation}\label{conv2}
E_{t}(k;t\xi)=(f \ast \iota(f))(\xi)=\frac{1}{2\pi}\int_{0}^{2\pi}e^{it[k(\theta) -k(\theta -\xi)]}\,d\theta.
\end{equation}

\begin{prop}\label{asych1}
Let $k$ be a smooth real-valued function on $S^{1}$. 
Then, for each $\xi \in \mb{R}$, we have
\[
\lim_{t \to \infty}E_{t}(k;\xi)=\frac{1}{2\pi} \int_{0}^{2\pi} e^{i \xi k'(\theta)}\,d\theta. 
\]
\end{prop}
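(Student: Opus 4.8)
The plan is to start from the explicit integral formula \eqref{conv2} and reduce the assertion to a uniform first-order Taylor expansion of $k$. Recall that \eqref{conv2} gives $E_{t}(k;t\xi)=\frac{1}{2\pi}\int_{0}^{2\pi}e^{it[k(\theta)-k(\theta-\xi)]}\,d\theta$. Replacing $\xi$ by $\xi/t$ in this identity yields
\[
E_{t}(k;\xi)=\frac{1}{2\pi}\int_{0}^{2\pi}e^{it[k(\theta)-k(\theta-\xi/t)]}\,d\theta,
\]
so the whole problem is to control the exponent $t[k(\theta)-k(\theta-\xi/t)]$ as $t\to\infty$.

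First I would expand the inner difference by Taylor's formula with remainder. The case $\xi=0$ is trivial (both sides equal $1$), so assume $\xi\neq 0$ and write $h=\xi/t$. Since $k$ is smooth on the compact manifold $S^{1}$, its second derivative is bounded; set $M:=\sup_{\theta}|k''(\theta)|$. Then $k(\theta)-k(\theta-h)=k'(\theta)h+\tfrac{1}{2}k''(\eta_{\theta})h^{2}$ for some $\eta_{\theta}$ lying between $\theta-h$ and $\theta$, and multiplying by $t=\xi/h$ gives
\[
t[k(\theta)-k(\theta-\xi/t)]=\xi k'(\theta)+\frac{\xi^{2}}{2t}k''(\eta_{\theta}).
\]
The remainder term is bounded in absolute value by $\xi^{2}M/(2t)$, uniformly in $\theta\in S^{1}$, and hence tends to $0$ uniformly as $t\to\infty$.

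It follows that the integrand $e^{it[k(\theta)-k(\theta-\xi/t)]}$ converges to $e^{i\xi k'(\theta)}$ uniformly in $\theta$. Since each integrand has modulus $1$ and $S^{1}$ has finite measure, I would then interchange the limit and the integral — justified directly by uniform convergence, or alternatively by the dominated convergence theorem with the constant dominating function $1$ — to obtain $\lim_{t\to\infty}E_{t}(k;\xi)=\frac{1}{2\pi}\int_{0}^{2\pi}e^{i\xi k'(\theta)}\,d\theta$, as claimed.

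The argument has no serious obstacle; the single point demanding care is the uniformity in $\theta$ of the Taylor remainder, which is precisely what the smoothness hypothesis on $k$ supplies through the uniform bound $M$ on $k''$ (indeed $C^{2}$ regularity would already suffice). The remaining passage to the limit is a routine interchange of limit and integration over the compact domain $S^{1}$.
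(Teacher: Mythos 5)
Your proof is correct and takes essentially the same approach as the paper: start from \eqref{conv2}, Taylor-expand the exponent to get $t[k(\theta)-k(\theta-\xi/t)]=\xi k'(\theta)+O(1/t)$, and pass to the limit inside the integral. The paper compresses this into one displayed line, leaving the uniformity of the remainder and the interchange of limit and integral implicit; you have simply supplied those routine justifications explicitly.
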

\begin{proof}
By $\eqref{conv2}$ and Taylor's formula, we have 
\[
E_{t}(k;\xi)  =\frac{1}{2\pi} \int_{0}^{2\pi} e^{it[k(\theta)-k(\theta-\xi/t)]}\,d\theta 
 =\frac{1}{2\pi} \int_{0}^{2\pi} e^{it[\xi k'(\theta)/t +O(1/t^{2})]}\,d\theta \to \frac{1}{2\pi} \int_{0}^{2\pi} e^{i\xi k'(\theta)}\,d\theta
\]
as $t \to \infty$, which completes the proof.
\end{proof}

\noindent{\it Proof of Theorem $\ref{wld1}$.} \hspace{1pt} 
The items (1), (2) and (3) in Theorem $\ref{wld1}$ are easily proved by using 
Proposition $\ref{asych1}$ with $k=\psi_{0}$ for (1), $k=\varphi_{s}$ for (2) and $k=\psi_{s}$ for (3), 
respectively. \hfill$\square$

\vspace{10pt}

\noindent{\it Proof of Theorem $\ref{WCont1}$.} \hspace{1pt} 
Let $(s_{o},u_{o}) \in [0,1) \times [0,\infty)$. First assume that $u_{o}>0$. 
We set $\arcsin(s)=s(1+s^{2}a(s))$ with a smooth function $a(s)$ bounded on $0 \leq s \leq 1/2$. 
We have 
\begin{equation}\label{psiT}
\psi_{s}(\theta)=\cos \theta \frac{1+s^{2}a(s\cos \theta)}{1+s^{2}a(s)}, 
\end{equation}
which shows that the function $\psi_{s}(\theta)$ is continuous for $(s,\theta)$ with $0 \leq s$ and $\theta \in \mb{R}$. 
Thus, the characteristic function 
\[
E_{t}(\psi_{s};\xi)=
\frac{1}{2\pi}\int_{0}^{2\pi} e^{it[\psi_{s}(\theta)-\psi_{s}(\theta -\xi/t)]}\,d\theta
\]
is continuous in $s,t,\xi$ even when $s=0$. 
Hence the weak continuity of $d\mu_{(s,u)}$ at $(s_{o},u_{o})$ is obvious when $u_{o}>0$. 
Let us prove that $d\mu_{(s,u)}$ is weakly continuous at $(s_{o},0)$. 
Suppose first that $s_{o}>0$. Obviously we have $\dsp \wlim_{s \to s_{o}}d\mu_{s,0}=d\mu_{s_{o},0}$. 
Let $s,u>0$ and set $t=1/u$. Then, we have 
\begin{equation}\label{aux11}
\left|
E_{t}(\psi_{s};\xi)-\frac{1}{2\pi}\int_{0}^{2\pi} e^{i\xi \psi_{s_{o}}'(\theta)}\,d\theta 
\right|
\leq \frac{1}{2\pi}\int_{0}^{2\pi}
\left|
e^{i\xi(\psi_{s}'(\theta) -\psi_{s_{o}}'(\theta)) -i(\xi^{2}/t) \int_{0}^{1}(1-r)\psi_{s}''(\theta-r\xi/t)\,dr} -1
\right|
\,d\theta. 
\end{equation}
By $\eqref{psiT}$, we have 
\[
\psi_{s}'(\theta)=-\sin \theta (1+O(s^{2})),\quad \psi_{s}''(\theta)=-\cos \theta (1+O(s^{2})). 
\]
Hence the right-hand side of $\eqref{aux11}$ tends to zero as $(s,1/t) \to (s_{o},0)$. 
Since the characteristic function of the measure $d\mu_{(s_{o},0)}$ is given by 
\[
\xi \mapsto \frac{1}{2\pi}\int_{0}^{2\pi} e^{i\xi \psi_{s_{o}}'(\theta)}d\theta, 
\]
we see that $d\mu_{(s,u)}$ is weakly continuous at $(s_{o},0)$. 
Next, let us prove the continuity at $(0,0)$. Obviously we have $\dsp \wlim_{s \to 0}d\mu_{(s,0)}=d\mu_{(0,0)}$. 
Now let $s,u>0$ and we set $t=1/u$. 
Then, again by Taylor's formula, we obtain 
\[
\left|
e^{it[\psi_{s}(\theta)-\psi_{s}(\theta-\xi/t)]}-e^{-i\xi \sin \theta}
\right|
\leq C(s^{2}+1/t), 
\]
where the positive constant $C$ can be chosen uniformly in $\theta \in \mb{R}$ and locally uniformly in $\xi \in \mb{R}$. 
From this, we have 
\[
E_{t}(\psi_{s};\xi)=
\frac{1}{2\pi} \int_{0}^{2\pi} e^{-i\xi \sin \theta}\,d\theta+O(s^{2}+1/t)
=\frac{1}{\pi}\int_{-1}^{1}\frac{\cos (\xi x)}{\sqrt{1-x^{2}}}\,dx +O(s^{2}+1/t), 
\]
which shows $\dsp \wlim_{(s,1/t) \to (0,0)}d\mu_{(s,1/t)}=d\mu_{(0,0)}$. This completes the proof. 
\hfill$\square$

\vspace{30pt}

\end{document}